\newtheorem{thm}{Theorem}
\newtheorem{cor}{Corollary}
\newcommand{\cao}{{\c c}{\~a}o}
\def\d{\,{\rm{d}}}
\begin{document}

\title{\bf \Large{On some Rajchman measures and equivalent Salem's problem}}
\author{Semyon Yakubovich}
\maketitle

 \markboth{\rm \centerline{SEMYON YAKUBOVICH}}{}
 \markright{\rm \centerline{RAJCHMAN\  MEASURES}}

 \begin{abstract}{We construct certain  Rajchman measures
 by using integrability properties of the Fourier and
Fourier-Stieltjes transforms. In particular,  we state a problem and prove
that it is equivalent to the known and still unsolved question posed
by R. Salem ({\it Trans. Amer. Math. Soc.} {\bf 53} (3) (1943),  p.
439) whether Fourier-Stieltjes coefficients of the Minkowski's
question mark function vanish at infinity.}
\end{abstract}

{\bf Keywords}:  Rajchman measure, Minkowski question mark function,
Salem's problem, Fourier-Stieltjes transform, modified Bessel
function, index integral, Fourier-Stieltjes coefficients

{\bf Mathematics  subject classification}:  33C10, 42A16, 42B10,
44A15

\section{Introduction and auxiliary results}

It is well known in the elementary theory of the Fourier-Stieltjes
integrals that if $h(x)$ is absolutely continuous then
\begin{eqnarray}\label{F}
g(t)=\int\limits_{\Omega} e^{ ixt} dh(x), \ \Omega \subset
\mathbb{R}, \ t \in \mathbb{R}
\end{eqnarray}
tends to zero as $|t| \to \infty,$  because in this case the
Fourier-Stieltjes transform $g(t)$ is an ordinary Fourier transform
of an integrable function. Thus $h(x)$ supports a measure whose
Fourier transform vanishes at infinity. Such measures are called
Rajchman measures (see details, for instance, in \cite{LYO}).
However, when $h$ is continuous, the situation is quite different
and the classical Riemann-Lebesgue lemma for the class $L_1$, in
general, cannot be applied. The question is quite delicate when it
concerns singular monotone functions (see \cite{Sal3}, Ch. IV). For
such singular measures there are various examples and the
Fourier-Stieltjes transform need not tend to zero,  although there
do exist measures for which it goes to zero. For instance, Salem
\cite{Sal1}  \cite{Sal2} gave examples of singular functions, which
are strictly increasing and whose Fourier coefficients still do not
vanish at infinity. On the other hand, Menchoff in 1916 \cite{Men}
gave a first example of a singular distribution whose coefficients
vanish at infinity. Wiener and Wintner (see also \cite{IV}) proved
in 1938 that for every $\varepsilon >0$ there exists a singular
monotone function such that its Fourier coefficients behave as
$n^{-{1\over 2} +\varepsilon}, \ n \to \infty$.

Our goal in this paper is to construct  some  Raijchman's measures,
which are associated with  continuous functions of bounded
variation. In particular, we will prove  that the famous Minkowski'
s question mark function $?(x)$ \cite{D} is a Raijchman measure if
and only if its Forier-Stieltjes transform has a limit at infinity,
and then, of course,  the limit should be zero. This probably can
give an affirmative answer on the question posed by Salem in 1943 \cite{Sal1}.

The Minkowski question mark function $?(x):[0,1]\mapsto[0,1]$ is
defined by \cite{D}
\begin{equation}
?([0,a_{1},a_{2},a_{3},\ldots])=2\sum\limits_{i=1}^{\infty}(-1)^{i+1}2^{-\sum_{j=1}^{i}a_{j}},
\label{min}
\end{equation}
where $x=[0,a_{1},a_{2},a_{3},\ldots]$ stands for the representation
of $x$ by a regular continued fraction. We will keep the notation
$?(x)$, which was used  in  the original Salem's paper \cite{Sal1},
mildly resisting the temptation of changing it and despite this
symbol is quite odd to denote a function in such a way.  It is well
known that $?(x)$ is continuous, strictly increasing and singular
with respect to Lebesgue measure. It can be extended on $[0,
\infty]$ by using the following  functional equations
\begin{equation}\label{rel1}
?(x)= 1-?(1-x),
\end{equation}
\begin{equation}\label{rel2}
?(x)= 2?\left(\frac{x}{x+1}\right),
\end{equation}
\begin{equation}\label{rel3}
?(x)+ ?\left({1\over x}\right)= 2, \  x > 0.
\end{equation}
When $x \to 0$, it decreases exponentially $ \ ?(x)= O\left(2^{-
1/x}\right)$. Key values are $?(0)=0, \ ?(1)= 1,\ ?(\infty)= 2$. For
instance, from \eqref{rel1} and asymptotic behavior of the Minkowski
function $?(x)$ near zero one can easily get  the finiteness of the
following integrals
\begin{equation}\label{0}
\int_0^1 x^\lambda\  d ?(x) < \infty, \ \lambda \in \mathbb{R},
\end{equation}
\begin{equation}\label{1}
\int_0^1 (1-x)^\lambda\ d ?(x) < \infty, \ \lambda \in \mathbb{R}.
\end{equation}
Further, as was proved by Salem \cite{Sal1}, the Minkowski question
mark function satisfies the H$\ddot o$lder condition
\begin{equation}\label{Lip}
\left|?(x)- ?(y)\right| < C |x-y|^\alpha,
\end{equation}
of order
\begin{equation}\label{exp}
\alpha= \frac{\log 2}{2 \log {\sqrt 5 + 1\over 2}},
\end{equation}
where $C >0$ is an absolute constant. We will deal in the sequel
with the following Fourier -Stieltjes transforms of the Minkowski
question mark function
\begin{eqnarray}\label{fF}
f(t)=\int\limits_{0}^{1}e^{ixt}\ d ?(x),
 \quad  F(t)=\int\limits_{0}^{\infty}e^{ixt}\ d ?(x),\quad t \in \mathbb{R},
\end{eqnarray}
\begin{eqnarray}\label{fFc}
f_c(t)=\int\limits_{0}^{1}\cos xt \ d ?(x), \quad
F_c(t)=\int\limits_{0}^{\infty}\cos xt \ d ?(x),\ t \in
\mathbb{R_+},
\end{eqnarray}
\begin{eqnarray}\label{fFs} f_s(t)=\int\limits_{0}^{1}\sin xt \ d
?(x), \quad  F_s(t)=\int\limits_{0}^{\infty}\sin xt \d ?(x),\ t \in
\mathbb{R_+},
\end{eqnarray}
where all integrals converge absolutely and uniformly with respect
to $t$ because of straightforward estimates
\begin{eqnarray*}
|f(t)| \le \int\limits_{0}^{1} d ?(x) =1,
 \quad  |F(t)| \le \int\limits_{0}^{\infty}  d ?(x) = 2,
\end{eqnarray*}
\begin{eqnarray*}
|f_c(t)| \le 1, \quad |F_c(t)| \le 2,
\end{eqnarray*}
\begin{eqnarray*}
| f_s(t)| \le 1, \quad |F_s(t)| \le 2.
\end{eqnarray*}
 Further we observe that functional equation (\ref{rel1}) easily implies $f(t)= e^{it} f(-t)$ and
therefore $e^{-it/2} f(t) \in \mathbb{R}$. So, taking the imaginary
part we obtain the equality
\begin{eqnarray}\label{imprel1}
\cos \left({t\over 2}\right) f_s(t)=  \sin \left({t\over 2}\right)
f_c(t).
\end{eqnarray}
Hence, for instance,  letting $t= 2\pi  n, \ n \in \mathbb{N}_0$ it
gives $f_s(2\pi n)=0$ and $f_c(2\pi n)= d_n$.  In 1943  Salem asked
\cite{Sal1} whether $d_{n}\rightarrow 0$, as $n\rightarrow\infty$.

Further, by using functional equations (\ref{rel2}), (\ref{rel3})
for the Minkowski function we derive the following useful relations
\begin{eqnarray*}
\int\limits_{0}^{1}e^{ixt}\ d ?(x)= \int\limits_{0}^{\infty}e^{ixt}\
d ?(x)- \int\limits_{1}^{\infty}e^{ixt}\ d ?(x)\\
= \int\limits_{0}^{\infty}e^{ixt}\ d ?(x) + e^{it}
\int\limits_{0}^{\infty}e^{ixt}\
d ?\left({1\over x+1}\right)\\
= \int\limits_{0}^{\infty}e^{ixt}\ d ?(x) + {e^{it}\over 2}
\int\limits_{0}^{\infty}e^{ixt}\
d ?\left({1\over x}\right)\\
= \left(1- {e^{it}\over 2}\right) \int\limits_{0}^{\infty}e^{ixt}\ d
?(x),
\end{eqnarray*}
which imply the functional equation
\begin{eqnarray}\label{imprel2}
F(t)= \frac{2 f(t)}{2- e^{it}}.
\end{eqnarray}
Taking real and imaginary parts in (\ref{imprel2}) and employing
functional equation (\ref{rel1}) it is not difficult to deduce the
following important equalities for the Fourier-Stieltjes transforms
(\ref{fFc}), (\ref{fFs})
\begin{eqnarray}\label{imprel3}
F_c(t)= \frac{2 }{5- 4\cos t}\ f_c(t),
\end{eqnarray}
\begin{eqnarray}\label{imprel4}
F_s(t)= \frac{6}{5- 4\cos t}\ f_s(t).
\end{eqnarray}
Indeed, we have, for instance
\begin{eqnarray*}
\int\limits_{0}^{\infty}\cos xt \ d ?(x)= \frac{2 }{5- 4\cos
t}\left[(2-\cos t)\int\limits_{0}^{1}\cos xt \ d ?(x)- \sin
t\int\limits_{0}^{1}\sin xt \ d ?(x) \right]\\
 = \frac{2 }{5- 4\cos
t}\left[2 \int\limits_{0}^{1}\cos xt \ d ?(x)-
\int\limits_{0}^{1}\cos t(1-x)\ d ?(x) \right]\\
=  \frac{2 }{5- 4\cos t} \int\limits_{0}^{1}\cos xt \ d ?(x)
\end{eqnarray*}
and this yields relation (\ref{imprel3}). Analogously we get
(\ref{imprel4}). In particular, letting $t= 2\pi n, n \in
\mathbb{N}_0$ in (\ref{imprel3}), (\ref{imprel4}) we find
accordingly
\begin{eqnarray*}
\int\limits_{1}^{\infty}\cos (2\pi n x) \ d ?(x)=
\int\limits_{0}^{1}\cos (2\pi  n x)\ d ?(x),
\end{eqnarray*}
\begin{eqnarray*}
\int\limits_{1}^{\infty}\sin (2\pi n x) \ d ?(x)= 5
\int\limits_{0}^{1}\sin (2\pi  n x)\ d ?(x)=0
\end{eqnarray*}
via  (\ref{imprel1}). Generally, equalities (\ref{imprel3}),
(\ref{imprel4}) yield
\begin{eqnarray*}
\int\limits_{1}^{\infty}\cos xt \ d ?(x)= \frac{1- 8\sin^2(t/2)}{1+
8\sin^2(t/2)}\int\limits_{0}^{1}\cos x t\ d ?(x),
\end{eqnarray*}
\begin{eqnarray*}
\int\limits_{1}^{\infty}\sin xt \ d ?(x)= \frac{5- 8\sin^2(t/2)}{1+
8\sin^2(t/2)}\int\limits_{0}^{1}\sin  xt\ d ?(x).
\end{eqnarray*}
respectively. For instance,
\begin{eqnarray*}
\int\limits_{1}^{\infty}\cos (xt_m) \ d ?(x)= 0,
\end{eqnarray*}
\begin{eqnarray*}
\int\limits_{1}^{\infty}\sin (xt_k) \ d ?(x)= 0
\end{eqnarray*}
for any $t_m, \ t_k$, which are roots of the corresponding equations
$$\sin(t_m/2)= \pm 1/(2\sqrt 2), \ \sin(t_k/2)= \pm \sqrt{5/8}, \quad
m, k \in \mathbb{N}.$$

 Further, since (see (\ref{imprel2}),\
(\ref{imprel3}), \
 (\ref{imprel4}))
\begin{eqnarray}\label{inF}
{1\over 2}\ |F(t)| \le |f(t)| \le  {3\over 2} |F(t)|,
\end{eqnarray}
\begin{eqnarray}\label{inFc}
{1\over 2} \ |F_c(t)| \le |f_c(t)| \le  {9\over 2} |F_c(t)|,
\end{eqnarray}
\begin{eqnarray}\label{inFs}
{1\over 6}\  |F_s(t)| \le  |f_s(t)| \le {3\over 2} |F_s(t)|,
\end{eqnarray}
then Fourier-Stieltjes transforms of the Minkowski question mark
function over $(0,1)$ tend to zero when $|t| \to \infty$ if and only
if the same property is guaranteed by Fourier-Stieltjes transforms
over $(0,\infty)$.

We will show in the next section that the finite Fourier-Stieltjes
transform can be treated with the use of the so-called
Lebedev-Stieltjes integrals, involving the modified Bessel function
$K_{i\tau}(x)$ of the pure imaginary index \cite{YAK1}.  It is known
that the modified Bessel function $K_\mu(z)$ satisfies the
differential equation
\begin{equation*}
 z^2{d^2u\over dz^2}  + z{du\over dz} - (z^2+\mu^2)u = 0
\end{equation*}
and  has the following asymptotic behavior
\begin{equation}\label{as1}
K_\mu(z) = \left( \frac{\pi}{2z} \right)^{1/2} e^{-z} [1+ O(1/z)],
\qquad z \to \infty,
\end{equation}
\begin{equation}\label{as2}
K_\mu(z) = O( z^{-|{\rm Re}\mu|}), \ z \to 0, \ \mu \ne 0,
\end{equation}
\begin{equation}\label{as3}
K_0(z) = -\log z + O(1), \ z \to 0.
\end{equation}
When $|\tau| \to \infty$ and $x > 0$ is fixed it behaves as
\begin{equation}\label{as4}
K_{i\tau}(x)= O\left({e^{-\pi|\tau|/2}\over \sqrt {|\tau|}}\right).
\end{equation}
We will  appeal in the sequel to the uniform inequality for the
modified Bessel function
\begin{equation}\label{inequn}
\left|K_{i\tau}(x)\right| \le {x^{-1/4}\over \sqrt{ \sinh \pi\tau
}}, \ x, \tau >0
\end{equation}
and its representation via the following Fourier cosine
integral
\begin{equation}\label{coskl}
\cosh\left({\pi\tau \over 2}\right) K_{i\tau}(x)= \int_0^\infty \cos
\tau u \cos(x \sinh u) du, \ x >0.
\end{equation}
Furthermore, employing relation (2.16.48.20) in \cite{Prud} and
making differentiation by a parameter we derive useful integral with
respect to an index of the modified Bessel function
\begin{equation}\label{intind}
\begin{split}
& {1\over \pi}\int_{-\infty}^\infty \tau e^{\lambda \tau} \
\left(t+ (1+t^2)^{1/2}\right)^{i\tau} K_{i\tau}(x) d\tau\\
&\hspace{1cm} = x \exp\left(-x \left[(1+t^2)^{1/2}\cos \lambda - i t
\sin\lambda\right]\right) \left[(1+t^2)^{1/2}\sin \lambda + it \cos
\lambda\right], \quad  x,t  >0
\end{split}
\end{equation}
where $0 \le \lambda < {\pi \over 2}$.

\section{Some Rajchman measures}

In this section we prove several  theorems, characterizing Rajchman
measures, which are associated with Fourier-Stieltjes integrals over
finite and infinite intervals.

We begin with  the following general result.

\begin{thm}\label{gensal} Let  $\varphi$ be a real- valued continuous integrable function of
bounded variation on $(0,\infty)$ vanishing at infinity. Then
$\varphi$ supports a Rajchman measure relatively its
Fourier-Stieltjes transform
\begin{equation}\label{fourraj}
\Phi(t)= \int_{0}^\infty e^{ixt}\  d\varphi(x),
\end{equation}
if and only if it has a limit at infinity $(|t| \to \infty)$.
\end{thm}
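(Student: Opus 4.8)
The plan is to reduce the statement to the Riemann–Lebesgue lemma by an integration-by-parts argument combined with a careful analysis of the ``boundary term at infinity.'' One direction is almost trivial: if $\varphi$ supports a Rajchman measure, then by definition $\Phi(t)\to 0$ as $|t|\to\infty$, so in particular the limit exists. The content is the converse: assume $\lim_{|t|\to\infty}\Phi(t)=:\ell$ exists, and show $\ell=0$ (equivalently, show $\Phi(t)\to 0$).

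First I would write, for $R>0$, an integration by parts on $(0,R)$:
\begin{equation*}
\int_0^R e^{ixt}\,d\varphi(x)= e^{iRt}\varphi(R)-\varphi(0)-it\int_0^R e^{ixt}\varphi(x)\,dx,
\end{equation*}
which is legitimate since $\varphi$ is continuous and of bounded variation (Riemann–Stieltjes by parts). Since $\varphi$ is integrable on $(0,\infty)$ and vanishes at infinity, letting $R\to\infty$ gives the identity
\begin{equation*}
\Phi(t)= -\varphi(0)-it\,\widehat{\varphi}(t), \qquad \widehat{\varphi}(t):=\int_0^\infty e^{ixt}\varphi(x)\,dx,
\end{equation*}
valid for $t\ne 0$; here the integrability of $\varphi\in L_1(0,\infty)$ guarantees $\widehat{\varphi}$ is a genuine Fourier transform of an $L_1$ function. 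Now apply the classical Riemann–Lebesgue lemma to $\widehat{\varphi}$: $\widehat{\varphi}(t)\to 0$ as $|t|\to\infty$. The delicate point is that this alone does not kill $t\,\widehat\varphi(t)$; one only gets $o(1)\cdot t$. This is exactly where the hypothesis that $\Phi$ has a limit enters. Rearranging, $it\,\widehat\varphi(t)= -\varphi(0)-\Phi(t)$, and if $\Phi(t)\to\ell$ then $t\,\widehat\varphi(t)\to i(\varphi(0)+\ell)$; but $\widehat\varphi(t)\to 0$ forces $t\,\widehat\varphi(t)$ to be unbounded unless its limit is a finite nonzero constant, which is impossible for a function tending to $0$ times $t$ unless... — more precisely, if $t\,\widehat\varphi(t)$ converges to a constant $c$ while $\widehat\varphi(t)\to 0$, one must have $c=0$? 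No: take $\widehat\varphi(t)=c/t$; that is consistent. So the real argument must use more, namely that $\widehat\varphi$ being the Fourier transform of an $L_1$ function cannot behave like $c/t$ at infinity unless $c=0$, which follows because $\int^\infty \widehat\varphi(t)\,dt/t$-type considerations, or better: $t\widehat\varphi(t)=\widehat{-i\varphi'}$ only if $\varphi$ is absolutely continuous.

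The cleaner route, which I would actually adopt, is to integrate $\Phi$ against a smooth bump: for the Cesàro/Fejér-type average $\frac1T\int_0^T \Phi(t)\,dt$, the assumed limit $\ell$ equals this average in the limit, while substituting the definition of $\Phi$ and using Fubini gives $\frac1T\int_0^T\int_0^\infty e^{ixt}d\varphi(x)\,dt=\int_0^\infty \frac{e^{ixT}-1}{ixT}\,d\varphi(x)$, which tends to $0$ by dominated convergence (the integrand is bounded by $\min(1,2/(x T))$ and the measure $|d\varphi|$ is finite). Hence $\ell=0$. Then, to upgrade ``limit is $0$'' to ``$\Phi(t)\to 0$'' — which is automatic once we know the limit exists and equals $0$ — nothing more is needed. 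The main obstacle is making the interchange of limits/integrals rigorous near $x=0$ and $x=\infty$ and confirming that finiteness of the total variation $\int_0^\infty|d\varphi|<\infty$ (which follows from bounded variation plus the stated hypotheses) legitimizes dominated convergence; handling the possibly slow decay of $\varphi$ at infinity in the by-parts step is the second point requiring care.
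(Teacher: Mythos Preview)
Your Ces\`aro-average argument (the ``cleaner route'' you ultimately adopt) is correct and considerably more economical than the paper's proof. The paper splits $\Phi=\Phi_c+i\Phi_s$ and handles the two pieces by different devices: for $\Phi_c$ it invokes the integrated Fourier inversion formula together with a Fej\'er-kernel identity to reach
\[
\frac{1}{x}\int_0^x[\varphi(y)-\varphi(0)]\,dy=\frac{2}{\pi}\int_0^\infty \Phi_c\!\left(\frac{y}{x}\right)\frac{1-\cos y}{y^2}\,dy,
\]
and then lets $x\to 0^+$; for $\Phi_s$ it runs a noticeably more intricate argument, splitting the inverse-Fourier integral into three ranges and applying the first mean value theorem to extract a subsequence along which $\Phi_s\to 0$. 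Your single observation
\[
\frac{1}{T}\int_0^T\Phi(t)\,dt=\int_0^\infty\frac{e^{ixT}-1}{ixT}\,d\varphi(x)\longrightarrow 0
\]
(dominated convergence against the finite total-variation measure $|d\varphi|$, the kernel being bounded by $1$ and tending pointwise to $0$ on $(0,\infty)$) forces the assumed limit $\ell$ to vanish in one stroke, with no need to separate real and imaginary parts or to appeal to Fourier inversion. What the paper's longer route buys is the explicit identification of $\Phi_c(t)+\varphi(0)$ and $\Phi_s(t)$ with $t\int_0^\infty\varphi(x)\sin(xt)\,dx$ and $-t\int_0^\infty\varphi(x)\cos(xt)\,dx$, which is precisely the input to the corollary that follows; your proof is cleaner for the theorem itself, and the integration-by-parts identity you already recorded recovers that corollary immediately.
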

\begin{proof} Without loss of generality we prove the theorem for positive
$t$. Evidently, the necessity is trivial and we will prove the
sufficiency. Suppose that the limit of $\Phi(t)$ when $t \to
+\infty$ exists. Since $\Phi(t)= \Phi_c(t) + i \Phi_s(t)$, where
\begin{equation}\label{cosraj}
\Phi_c(t)= \int_{0}^\infty \cos xt\  d\varphi(x),
\end{equation}
\begin{equation}\label{sinraj}
\Phi_s(t)= \int_{0}^\infty \sin xt\  d\varphi(x),
\end{equation}
we will treat  these transforms separately. Taking (\ref{cosraj})
and integrating by parts we get
\begin{equation}\label{phicos}
\Phi_c(t)= - \varphi(0) + t \int_{0}^\infty \varphi(x) \sin xt\ dx.
\end{equation}
However, since $\varphi \in L_1\left(\mathbb{R}_+\right)$, we appeal
to the integrated form of the Fourier formula (cf. \cite{Tit}, Th.
22) to write for all $x \ge 0$
\begin{equation*}
\int_0^x \varphi(y) \ dy= {2\over \pi} \int_{0}^\infty \frac{1- \cos
yx} {y} \int_{0}^\infty \varphi(u) \sin uy\ du.
\end{equation*}
But taking into account the previous equality after simple change of
variable we come out with the relation
\begin{equation*}
{1\over x} \int_0^x \varphi(y) \ dy= {2\over \pi} \int_{0}^\infty
\frac{1- \cos y} {y^2}\left[\varphi(0)+ \Phi_c\left({y\over
x}\right)\right]dy, \ x >0.
\end{equation*}
Minding the value of elementary Feijer type integral
\begin{equation*}
{2\over \pi} \int_{0}^\infty \frac{1- \cos y} {y^2} dy=1,
\end{equation*}
we establish an important equality
\begin{equation}\label{eq1}
{1\over x} \int_0^x [\varphi(y)- \varphi(0)] \ dy= {2\over \pi}
\int_{0}^\infty \Phi_c\left({y\over x}\right)\frac{1- \cos y}
{y^2}dy, \ x >0.
\end{equation}
 Meanwhile, the left-hand side of (\ref{eq1}) is evidently goes to zero when $x \to 0+$ via the
continuity of $\varphi$ on $[0,\infty)$. Further, since $\varphi$ is
of bounded variation on $(0,\infty)$ we obtain the uniform estimate
\begin{equation*}
|\Phi_c(t)| \le \int_{0}^\infty \  d V_\varphi(x) = \Phi_0,
\end{equation*}
where $V_\varphi(x)$ is a variation of $\varphi$ on $[0,x]$ and
$\Phi_0 >0$ is a total variation of $\varphi$. This means that
$\Phi_c(t)$ is continuous and bounded on $\mathbb{R}_+$.
Furthermore, the integral with respect to $x$ in the right-hand side
of (\ref{eq1}) converges absolutely and uniformly by virtue of the
Weierstrass test. Consequently, since $\Phi_c(t)$ has a limit at
infinity, which is finite, say $a$, one can pass to the limit
through equality (\ref{eq1}) when $x \to 0+$. Hence we find
\begin{equation*}
\lim_{x\to 0+} {1\over x} \int_0^x [\varphi(y)- \varphi(0)] \ dy= {2
a\over \pi} \int_{0}^\infty \frac{1- \cos y} {y^2}dy= a = 0.
\end{equation*}
In order to complete the proof, we need  to verify whether the
Fourier sine transform (\ref{sinraj}) tends  to zero as well. To do
this, we appeal to the corresponding integrated form of the Fourier
formula for the Fourier cosine transform
\begin{equation}\label{Fform}
- \int_0^x \varphi(y) \ dy=  {2\over \pi} \int_{0}^\infty \frac{\sin
yx} {y^2} \Phi_s(y)\ dy, \ x >0,
\end{equation}
where after integration by parts $\Phi_s(t)$ turns to be represented
as follows
\begin{equation}\label{phis}
\Phi_s(t) = - t \int_{0}^\infty \varphi(u) \cos ut\ du, \ t >0.
\end{equation}
Hence it is easily seen that $\Phi_s(t)= O(t), \ t \to 0+$ and since
$|\Phi_s(t)| \le \Phi_0$ we have that ${\Phi_s(t)\over t} \in
L_2(\mathbb{R}_+)$. This means that the integral in the right-hand
side of (\ref{Fform}) converges absolutely and uniformly by $x \ge
0$. After simple change of variable we split the integral in the
right-hand side of  (\ref{Fform}) on two integrals to obtain
\begin{equation*}
- {1\over x }\int_0^x \varphi(y) \ dy= {2\over \pi } \int_{0}^1
\frac{\sin y} {y^2} \Phi_s\left({y\over x}\right)\ dy  + {2\over \pi
} \int_{1}^\infty \frac{\sin y} {y^2} \Phi_s\left({y\over x}\right)\
dy.
\end{equation*}
Considering again $x >0$ sufficiently small and splitting the
integral over $(0,1)$ on two more integrals over $(0,
x\log^\gamma(1/x))$ and $( x\log^\gamma(1/x), 1),$ where $0 < \gamma
< 1$,  we derive the equality
\begin{equation*}
 {2\over \pi } \int_{x\log^\gamma(1/x)}^1 \frac{\sin y} {y^2}
\Phi_s\left({y\over x}\right)\ dy = - {1\over x }\int_0^x \varphi(y)
\ dy - {2\over \pi } \int_{1}^\infty \frac{\sin y} {y^2}
\Phi_s\left({y\over x}\right)\ dy
\end{equation*}
\begin{equation*}
- {2\over \pi } \int_{0}^{x\log^\gamma(1/x)} \frac{\sin y} {y^2}
\Phi_s\left({y\over x}\right)\ dy.
\end{equation*}
Minding the inequality (see (\ref{phis})) $|\Phi_s(t)| \le t
||\varphi||_{L_1(\mathbb{R}_+)}, \ t \ge 0$, the right-hand side of
the latter equality has the straightforward estimate
\begin{equation}\label{estss}
\begin{split}
& \left|{1\over x }\int_0^x \varphi(y) \ dy + {2\over \pi }
\int_{1}^\infty \frac{\sin y} {y^2} \Phi_s\left({y\over x}\right)\
dy\right.\\
&  \left. + {2\over \pi } \int_{0}^{x\log^\gamma(1/x)} \frac{\sin y}
{y^2} \Phi_s\left({y\over x}\right)\ dy\right| \le \sup_{y \ge
0}|\varphi(y)| + {2\over \pi }\left[\Phi_0 +
||\varphi||_{L_1(\mathbb{R}_+)}\log^\gamma(1/x)\right].
\end{split}
\end{equation}
 On the other hand, via the first mean value theorem
\begin{equation*}
 {2\over \pi } \left|\int_{x\log^\gamma(1/x)}^1 \frac{\sin y} {y^2}
\Phi_s\left({y\over x}\right)\ dy\right| ={2\over \pi
}|\Phi_s(\xi(x))| \int_{\log^\gamma(1/x)}^1 \frac{\sin y} {y^2}\ dy,
\end{equation*}
where
$$\log^\gamma\left({1\over x}\right) \le \xi(x) \le {1\over x}.$$
Meanwhile, we have
\begin{equation*} {2\over \pi
}\int_{x\log^\gamma(1/x)}^1 \frac{\sin y} {y^2}\ dy
> {2\sin 1 \over \pi }\int_{\log^\gamma(1/x)}^1 {dy\over
y}= {2\sin 1\over \pi}\log\left({1\over x\log^\gamma(1/x)}\right).
\end{equation*}
Consequently, combining with (\ref{estss}) we find
\begin{equation}\label{ests}
\begin{split}
& |\Phi_s(\xi(x))| <  {1\over \sin 1}\left[{\pi\over 2 }\sup_{y \ge
0}|\varphi(y)| + \Phi_0 +
||\varphi||_{L_1(\mathbb{R}_+)}\log^\gamma(1/x)\right]\log^{-1}\left({1\over x\log^\gamma(1/x)}\right)\\
& = o(1), \  x \to 0+.
\end{split}
\end{equation}
Thus making $x \to 0+$ we get $\xi(x) \to +\infty$ and therefore
there is a subsequence $t_n= \xi(x_n) \to \infty$ such that
$\lim_{n\to +\infty}|\Phi_s(t_n)|=0$. But since the limit of
$\Phi_s(t)$ exists, when $t \to +\infty$ it will be zero. So
$\varphi$ supports a Raijcman measure and the theorem is proved.
\end{proof}

\begin{cor}\label{gensal1} Under conditions of Theorem \ref{gensal} $\varphi$ supports a Rajchman
measure if and only if two limits
\begin{equation*}
\lim_{t \to +\infty} \  t \int_{0}^\infty \varphi(x) \sin xt\ dx,  \quad \quad
\lim_{t \to +\infty} \  t \int_{0}^\infty \varphi(x) \cos xt\ dx
\end{equation*}
exist simultaneously (if so,  they equal to $\varphi(0)$ and $0$, respectively).
\end{cor}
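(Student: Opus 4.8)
The plan is to reduce the statement to Theorem \ref{gensal} by observing that the two parametric integrals here are, up to elementary constants, exactly the cosine and sine transforms $\Phi_c$ and $\Phi_s$ that already appeared in the proof of that theorem. Indeed, the integration-by-parts identity \eqref{phicos} reads
\[
t\int_{0}^\infty \varphi(x)\sin xt\ dx = \Phi_c(t) + \varphi(0),
\]
while \eqref{phis} reads
\[
t\int_{0}^\infty \varphi(x)\cos xt\ dx = -\,\Phi_s(t),
\]
both valid for $t>0$ since $\varphi$ is continuous, of bounded variation on $(0,\infty)$, integrable, and vanishes at infinity.

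First I would substitute these two equalities into the hypothesis. The first integral has a finite limit as $t\to+\infty$ if and only if $\Phi_c(t)$ does, and then $\lim_{t\to+\infty} t\int_0^\infty \varphi(x)\sin xt\ dx = \varphi(0)+\lim_{t\to+\infty}\Phi_c(t)$; likewise the second integral has a limit if and only if $\Phi_s(t)$ does, with $\lim_{t\to+\infty} t\int_0^\infty \varphi(x)\cos xt\ dx = -\lim_{t\to+\infty}\Phi_s(t)$. Since $\Phi(t)=\Phi_c(t)+i\Phi_s(t)$ with $\Phi_c,\Phi_s$ real-valued, the two one-sided limits exist \emph{simultaneously} precisely when $\lim_{t\to+\infty}\Phi(t)$ exists.

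Next I would invoke Theorem \ref{gensal}: under the present hypotheses $\varphi$ supports a Rajchman measure if and only if $\Phi(t)$ has a limit at infinity, and that limit is then necessarily $0$ (this is built into the proof of the theorem, where $\Phi_c$ and $\Phi_s$ are each shown to tend to $0$). Feeding $\Phi_c(t)\to 0$ and $\Phi_s(t)\to 0$ back into the two displayed equalities yields $t\int_0^\infty\varphi(x)\sin xt\ dx\to\varphi(0)$ and $t\int_0^\infty\varphi(x)\cos xt\ dx\to 0$, which is the asserted evaluation of the limits.

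There is essentially no genuine obstacle here; the corollary is a restatement of Theorem \ref{gensal} in terms of the intermediate quantities from its own proof. The only points deserving a line of care are that the existence of the limit of a complex-valued function on $\mathbb{R}_+$ is equivalent to the simultaneous existence of the limits of its real and imaginary parts — which is why the word ``simultaneously'' is essential in the statement — and that the value $0$ of $\lim_{t\to+\infty}\Phi(t)$ is forced by the Rajchman property rather than assumed, so that the final identification of the limits as $\varphi(0)$ and $0$ is genuinely a conclusion.
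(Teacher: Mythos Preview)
Your proposal is correct and is exactly the argument the paper has in mind: the corollary is stated without proof precisely because the identities \eqref{phicos} and \eqref{phis} from the proof of Theorem~\ref{gensal} give $t\int_0^\infty\varphi(x)\sin xt\,dx=\Phi_c(t)+\varphi(0)$ and $t\int_0^\infty\varphi(x)\cos xt\,dx=-\Phi_s(t)$, after which the equivalence and the values of the limits follow immediately from that theorem.
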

More general result deals with the smoothness of the
Fourier-Stieltjes transform and a behavior at infinity of its
derivatives.

We have

\begin{cor}\label{gensal2} Let $n \in \mathbb{N}_0, \ \varphi(x), \ x \ge 0$ be a real-valued continuous function such
that $x^{m}\varphi(x)$ is of bounded variation on $[0, \infty)$ for
each $ m =0,1,\dots, n$. If $\varphi(x)= o(x^{-n}), x \to \infty$
and $x^{n}\varphi(x) \in L_1(\mathbb{R}_+)$, then the corresponding
Fourier-Stieltjes transform $(\ref{fourraj})$ $\Phi(t)$ is $n$ times
differentiable on $\mathbb{R}_+$, its $n$-th order derivative  is
equal to
\begin{equation}\label{der}
\Phi^{(n)}(t)= \int_{0}^\infty (ix)^{n} e^{itx} \ d\varphi(x)
\end{equation}
and vanishes at infinity if and only if there exists a limit of the
integral
\begin{equation*}
\Psi_n(t)= \int_{0}^\infty  e^{itx} \ d\left( x^n \varphi(x)\right)
\end{equation*}
when $|t| \to \infty.$
\end{cor}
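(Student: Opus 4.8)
The plan is to reduce the statement to Theorem~\ref{gensal} applied to the auxiliary function $\psi(x):=x^{n}\varphi(x)$. Before doing so I would record two elementary consequences of the hypotheses. First, since $\varphi$ is continuous on $[0,\infty)$ and $x^{n}\varphi(x)\in L_{1}(\mathbb{R}_{+})$, for every $m$ with $0\le m\le n$ the function $x^{m}\varphi(x)=x^{m-n}\cdot x^{n}\varphi(x)$ is bounded on $[0,1]$ and, on $[1,\infty)$, dominated in absolute value by $|x^{n}\varphi(x)|$; hence $x^{m}\varphi\in L_{1}(\mathbb{R}_{+})$ for all such $m$. Second, $\varphi(x)=o(x^{-n})$ forces $x^{m}\varphi(x)\to 0$ as $x\to\infty$ for every $m\le n$; in particular $\psi=x^{n}\varphi$ is a real-valued continuous integrable function of bounded variation on $(0,\infty)$ vanishing at infinity, so Theorem~\ref{gensal} applies to it. The case $n=0$ of the corollary is exactly Theorem~\ref{gensal}, so from now on I take $n\ge 1$.

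Next I would establish the differentiability together with formula (\ref{der}) by induction on $m$, proving that for $0\le m\le n$ one has $\Phi^{(m)}(t)=\int_{0}^{\infty}(ix)^{m}e^{ixt}\,d\varphi(x)$, with $\Phi\in C^{m}(\mathbb{R}_{+})$. The base case $m=0$ is (\ref{fourraj}). For the inductive step, integrate the Stieltjes integral $\int_{0}^{\infty}(ix)^{m}e^{ixt}\,d\varphi(x)$ by parts: the boundary contribution at infinity vanishes because $x^{m}\varphi(x)\to0$ (this is where $\varphi=o(x^{-n})$ is used), the contribution at $0$ occurs only for $m=0$, and one is left with a finite linear combination of ordinary Lebesgue integrals of the form $\int_{0}^{\infty}x^{j}\varphi(x)e^{ixt}\,dx$ with $j\le m$. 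Since $x^{j+1}\varphi\in L_{1}(\mathbb{R}_{+})$ whenever $j+1\le n$, differentiation under the integral sign is legitimate, the differentiated integrands being dominated by $x^{j+1}|\varphi|$; so $\Phi^{(m)}$ is differentiable, and differentiating back and then integrating by parts once more reassembles $\int_{0}^{\infty}(ix)^{m+1}e^{ixt}\,d\varphi(x)$. Taking $m=n$ yields $\Phi\in C^{n}(\mathbb{R}_{+})$ and (\ref{der}).

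It remains to relate $\Phi^{(n)}$ and $\Psi_{n}$. From the Stieltjes product rule $d(x^{n}\varphi(x))=nx^{n-1}\varphi(x)\,dx+x^{n}\,d\varphi(x)$ together with (\ref{der}) one obtains
\begin{equation*}
\Psi_{n}(t)=\frac{1}{i^{n}}\,\Phi^{(n)}(t)+n\int_{0}^{\infty}x^{n-1}\varphi(x)\,e^{ixt}\,dx ,
\end{equation*}
and, since $x^{n-1}\varphi\in L_{1}(\mathbb{R}_{+})$, the last integral tends to $0$ as $|t|\to\infty$ by the Riemann--Lebesgue lemma. Consequently $\Psi_{n}(t)$ has a limit at infinity if and only if $\Phi^{(n)}(t)$ does, and $\Phi^{(n)}(t)\to0$ if and only if $\Psi_{n}(t)\to0$. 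On the other hand, Theorem~\ref{gensal} applied to $\psi=x^{n}\varphi$ says precisely that $\Psi_{n}(t)\to0$ if and only if $\Psi_{n}$ has a limit at infinity. Chaining these equivalences gives the assertion: $\Phi^{(n)}$ vanishes at infinity if and only if $\lim_{|t|\to\infty}\Psi_{n}(t)$ exists.

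The whole argument rests on Theorem~\ref{gensal} and the Riemann--Lebesgue lemma; the only genuine labour is the inductive bookkeeping in the repeated integrations by parts. I expect the main, though modest, technical point to be checking that the boundary terms $x^{m}\varphi(x)$ at $x=\infty$ really vanish throughout the induction, which is where the decay assumption $\varphi(x)=o(x^{-n})$ — rather than the weaker $x^{n}\varphi\in L_{1}(\mathbb{R}_{+})$ — is genuinely needed.
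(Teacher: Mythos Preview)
Your proof is correct and follows essentially the same route as the paper: both reduce the question to Theorem~\ref{gensal} applied to $\psi(x)=x^{n}\varphi(x)$ via the product-rule identity $d(x^{n}\varphi)=nx^{n-1}\varphi\,dx+x^{n}\,d\varphi$, and both dispose of the resulting extra term $n\int_{0}^{\infty}x^{n-1}\varphi(x)e^{ixt}\,dx$ by the Riemann--Lebesgue lemma. Your treatment is in fact a little more explicit than the paper's---you spell out why Theorem~\ref{gensal} applies to $x^{n}\varphi$ and you justify differentiation under the integral sign by first integrating by parts to reduce to ordinary Lebesgue integrals, whereas the paper simply records the uniform bound $\Phi_{m}$ and asserts differentiability---but the underlying argument is the same.
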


\begin{proof} In fact, under conditions of the corollary  one can
differentiate $n$ times under the integral sign in the
Fourier-Stieltjes transform (\ref{fourraj}) via the absolute and
uniform convergence. Precisely, this circumstance is guaranteed  by
the estimate
\begin{equation*}
\left|\int_{0}^\infty (ix)^{m} e^{itx}\  d \varphi(x)\right|=
\left|\int_{0}^\infty e^{itx} \ d \left((ix)^{m}
\varphi(x)\right)\right.
\end{equation*}
\begin{equation*}
\left. - m \ i^m \int_{0}^\infty x^{m-1} \varphi(x) e^{itx} \
dx\right| \le Var_{[0,\infty)}  \left(x^{m} \varphi(x)\right)
\end{equation*}
\begin{equation*}
 + m \int_{0}^\infty x^{m-1} |\varphi(x)| \ dx = \Phi_m < \infty,\ m
 = 0,1,\dots,n,
\end{equation*}
where the latter integral  is finite since $x^{n}\psi(x) \in
L_1(\mathbb{R}_+)$ and $\varphi$ is continuous. Thus (\ref{der})
holds and in order to complete the proof we write it as
\begin{equation*}
\Phi^{(n)}(t)= i^n \left[ \int_{0}^\infty e^{itx} \ d\left(x^{n}
\varphi(x)\right) - n \int_{0}^\infty x^{n-1} \varphi(x) e^{itx}
dx\right].
\end{equation*}
The second integral of this equality tends to zero when $t \to
\infty$ via the Riemann-Lebesgue lemma. Therefore,
$\Phi^{(n)}(t)=o(1), \ t \to \infty, \ n \in \mathbb{N}_0$ if and
only if the first integral has a limit at infinity and this limit is
certainly zero.
\end{proof}

To finish a characterization of such a kind of Rajchman measures we
will prove one more theorem involving  general finite Fourier and
Fourier-Stieltjes transforms of a continuous function of bounded
variation on $[0,1]$
\begin{equation}\label{psif}
\hat{\psi}(t)= \int_0^1 e^{itx} \psi(x) dx,
\end{equation}
\begin{equation}\label{psifs}
\Psi(t)= \int_0^1 e^{itx}\  d \psi(x).
\end{equation}

\begin{thm}\label{gensal3} Let $\psi(x)$ be a continuous function  of bounded variation on $[0, 1]$
such that $\psi(0)=\psi(1)=0$ and $\psi(x)/ x \in L_2(0,1)$. If a
derivative of  Fourier transform (\ref{psif}) of $\psi$ satisfies
conditions:
\begin{equation}\label{klr}
i)\quad t^m \hat{\psi}^{(m)}(t) \in L_1(1,\infty), \ m= 0,1,2, \quad
ii)\quad  t^m \hat{\psi}^{(m-1)}(t)=o(1), \ t \to \infty, \ m=1,2,
\end{equation}
then $\psi$ is a Rajchman measure, i.e. $\Psi(t)=o(1), \ |t| \to
\infty.$
\end{thm}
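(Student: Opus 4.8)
The approach I would take is to collapse the Fourier--Stieltjes transform $\Psi$ onto the ordinary Fourier transform $\hat{\psi}$ by one integration by parts, then use hypothesis $i)$ to show that $\Psi$ converges at infinity, and finally force that limit to be zero. For the first step, since $\psi$ is continuous and of bounded variation on $[0,1]$ with $\psi(0)=\psi(1)=0$, the boundary terms in (\ref{psifs}) cancel and
\begin{equation*}
\Psi(t)=\int_{0}^{1}e^{itx}\,d\psi(x)=\bigl[e^{itx}\psi(x)\bigr]_{0}^{1}-it\int_{0}^{1}\psi(x)e^{itx}\,dx=-\,it\,\hat{\psi}(t),\qquad t\in\mathbb{R}.
\end{equation*}
Hence $t\,\hat{\psi}(t)=i\,\Psi(t)$, so the desired conclusion $\Psi(t)=o(1)$ is in fact already the content of hypothesis $ii)$ with $m=1$; nonetheless it is more instructive to derive it from $i)$ alone, which is the route I would write up.

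Next I would verify that $\Psi$ possesses a finite limit as $t\to+\infty$. Differentiation under the integral sign is legitimate because $d\psi$ is a finite measure, and from the identity above $\Psi'(t)=\int_{0}^{1}ixe^{itx}\,d\psi(x)=-i\,\hat{\psi}(t)-it\,\hat{\psi}'(t)$. By hypothesis $i)$ with $m=0$ and $m=1$ we have $\hat{\psi}\in L_{1}(1,\infty)$ and $t\hat{\psi}'(t)\in L_{1}(1,\infty)$, whence $\Psi'\in L_{1}(1,\infty)$; since $\Psi$ is smooth and bounded this gives $\lim_{t\to+\infty}\Psi(t)=\Psi(1)+\int_{1}^{\infty}\Psi'(s)\,ds=:L$. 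The remaining hypotheses --- $i)$ with $m=2$, $ii)$ with $m=2$ and $\psi(x)/x\in L_{2}(0,1)$ --- control $\Psi'$ and $\Psi''$ in the same fashion (they yield, for instance, $\Psi'\in L_{1}(1,\infty)$ together with $\Psi'(t)=o(1)$) and would matter only for a quantitative rate; the bare statement does not require them.

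It remains to show $L=0$, and this is the genuinely delicate step: as the examples recalled in the Introduction illustrate, convergence of the Fourier--Stieltjes transform of a singular measure does not by itself force the limit to vanish. I see two ways to close the argument. The first is to extend $\psi$ by zero to $(0,\infty)$, obtaining a continuous integrable function of bounded variation that vanishes at infinity and whose Fourier--Stieltjes transform over $(0,\infty)$ is again $\Psi$, and then to invoke Theorem \ref{gensal}, which asserts precisely that such a $\psi$ supports a Rajchman measure if and only if $\Psi$ has a limit at infinity; hence $L=0$. The second is direct and needs nothing extra: by Fubini, and because $\psi$, being continuous, carries no mass at the origin,
\begin{equation*}
\frac{1}{T}\int_{1}^{T}\Psi(t)\,dt=\frac{1}{T}\int_{0}^{1}\frac{e^{iTx}-e^{ix}}{ix}\,d\psi(x)\longrightarrow 0,\qquad T\to\infty,
\end{equation*}
by dominated convergence --- the integrand is bounded by $1$, since $|e^{iTx}-e^{ix}|\le(T-1)x$, and tends to $0$ pointwise on $(0,1]$ --- while the left-hand side tends to $L$ by the Ces\`aro theorem; so $L=0$. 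Either way $\Psi(t)=o(1)$, the case $t\to-\infty$ being symmetric. In summary, the reduction $\Psi=-it\hat{\psi}$ and the $L_{1}$-estimate for $\Psi'$ are routine, and the crux --- ruling out a nonzero limit --- is exactly where the integrated Fourier formula behind Theorem \ref{gensal} (or the elementary averaging identity) is needed.
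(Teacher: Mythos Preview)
Your proof is correct and takes a genuinely different --- and far more elementary --- route than the paper's. Your opening observation is sharp: after the single integration by parts $\Psi(t)=-it\,\hat\psi(t)$, hypothesis $ii)$ with $m=1$ reads $t\,\hat\psi(t)=o(1)$, which \emph{is} the conclusion; the theorem as stated is therefore essentially tautological. Your alternative derivation from $i)$ alone (for $m=0,1$), via $\Psi'=-i\hat\psi-it\hat\psi'\in L_{1}(1,\infty)$ followed by the Ces\`aro averaging identity (or, equivalently, an appeal to Theorem~\ref{gensal}) to force the limit $L=0$, is also sound and shows that hypothesis $i)$ for $m=2$, hypothesis $ii)$ for $m=2$, and the assumption $\psi(x)/x\in L_{2}(0,1)$ are all superfluous for the bare statement. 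The paper, by contrast, deploys the full index-integral machinery of the modified Bessel function: it represents $\Psi(t)$ through the Lebedev-type formula (\ref{intind1}), interchanges the $x$- and $\tau$-integrals (this is where $\psi/x\in L_{2}$ is invoked), rewrites the resulting kernel via the cosine representation (\ref{coskl}), and then integrates by parts three times in the hyperbolic variable --- this is where the full set of conditions (\ref{klr}) enters --- to obtain $\int_{0}^{1}K_{i\tau}(x)\psi(x)\,dx/x = O\!\bigl(e^{-\pi|\tau|/2}/\tau^{3}\bigr)$, after which the Riemann--Lebesgue lemma in the $\tau$-variable finishes the job. Your argument shows that this apparatus, while illustrative of the Lebedev--Stieltjes technique the author wishes to exhibit, is not needed for the statement as formulated.
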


\begin{proof} Without loss of generality we prove the theorem for positive $t$.
Taking (\ref{psifs}) we integrate by parts and eliminating
integrated terms come out with the equality
\begin{equation*}
\Psi(t)= -it \int_0^1 e^{itx} \psi(x) dx.
\end{equation*}
Meanwhile, passing to the limit through equality \eqref{intind}
when $\lambda \to {\pi\over 2}-$, we find
\begin{equation}\label{intind1}
\begin{split}
& {1\over \pi}\lim_{\lambda \to {\pi\over 2}-}
 \int_{-\infty}^\infty \tau e^{\lambda \tau} \
\left(t+ (1+t^2)^{1/2}\right)^{i\tau} K_{i\tau}(x) d\tau\\
&\hspace{1cm} = x (1+t^2)^{1/2} \ e^{ixt}, \quad  x,t  >0.
\end{split}
\end{equation}
Hence
\begin{equation}\label{l1}
\begin{split}
& \Psi(t)= {t \over \pi i (1+t^2)^{1/2}} \int\limits_{0}^{1} \psi(x) \\
&\hspace{1cm}\times  \lim_{\lambda \to {\pi\over 2}-}
 \int_{-\infty}^\infty \tau e^{\lambda \tau} \
\left(t+ (1+t^2)^{1/2}\right)^{i\tau} K_{i\tau}(x) d\tau\ {dx\over
x}.
\end{split}
\end{equation}
But since for each $x, t > 0$ and $0 \le \lambda < {\pi\over 2}$
(see \eqref{intind})
\begin{equation*}
\begin{split}
\left|\int_{-\infty}^\infty \tau e^{\lambda \tau} \ \left(t+
(1+t^2)^{1/2}\right)^{i\tau} K_{i\tau}(x) d\tau \right| \le x
\left[t+ (1+t^2)^{1/2}\right]
\end{split}
\end{equation*}
and $\psi$ is integrable we can take out the limit in \eqref{l1}
having the representation
\begin{equation}\label{l2}
\Psi(t)=   {t \over  \pi i (1+t^2)^{1/2}} \lim_{\lambda \to
{\pi\over 2}-} \int\limits_{0}^{1} \psi(x)\int_{-\infty}^\infty \tau
e^{\lambda \tau} \ \left(t+ (1+t^2)^{1/2}\right)^{i\tau}
K_{i\tau}(x) d\tau \ {dx\over x}.
\end{equation}
A change of the order of integration in \eqref{l2} is allowed via
Fubini's  theorem and can be easily justified employing inequality
(\ref{inequn}) and integrability of the function $\psi(x) x^{-5/4}$
over $(0,1)$, which is guaranteed by the condition $\psi(x)/x \in
L_2(0,1)$. Consequently,
\begin{equation}\label{raj}
\begin{split}
& \Psi(t)=  {t \over  \pi i (1+t^2)^{1/2}} \lim_{\lambda \to
{\pi\over 2}-}  \int_{-\infty}^\infty \tau e^{\lambda \tau} \
\left(t+ (1+t^2)^{1/2}\right)^{i\tau} \\
&\hspace{1cm} \times \int\limits_{0}^{1} K_{i\tau}(x) \psi(x)\
{dx\over x}\  d\tau.
\end{split}
\end{equation}
However, the inner integral with respect to $x$ in the latter
equality can be treated invoking with the Parseval equality for the
Fourier cosine transform. In fact,  calling representation
(\ref{coskl}) and asymptotic behavior of the modified Bessel
function (\ref{as1}), (\ref{as2}) we write
\begin{equation*}
\int\limits_{0}^{1} K_{i\tau}(x) \psi(x)\ {dx\over x} ={1\over
\cosh\left({\pi\tau \over 2}\right)} \int_0^\infty \cos \tau u
\int_0^1 \cos(x \sinh u) \psi(x)\ {dx du\over x}.
\end{equation*}
In the meantime, integrating by parts in the outer integral by $u$
when $|\tau|$ is big and taking into account that the integral by
$x$ vanishes when $u \to \infty$ due to the Riemann-Lebesgue lemma,
we obtain
\begin{equation}\label{part}
\begin{split}
& \int_0^\infty \cos \tau u \int_0^1 \cos(x \sinh u) \psi(x)\ {dx
du\over x}\\
&= {1\over \tau} \int_0^\infty \cosh u \ \sin \tau u \int_0^1 \sin
(x \sinh u) \psi(x)\ dxdu.
\end{split}
\end{equation}
Moreover, the latter integral converges absolutely and uniformly by
$|\tau|> A >0$ owing to the estimate
\begin{equation*}
\int_0^\infty \cosh u \ \left|\sin \tau u \int_0^1 \sin (x \sinh u)
\psi(x)\ dx\right| du \le \int_0^\infty \left|\int_0^1 \sin (x
y)\psi(x)\ dx\right| dy < \infty
\end{equation*}
and conditions (\ref{klr}) of the theorem. Integrating by parts two
more times in the right-hand side of (\ref{part}) we appeal to the
same conditions to derive the asymptotic relation
\begin{equation*}
\int\limits_{0}^{1} K_{i\tau}(x) \psi(x)\ {dx\over x}=
O\left({e^{-{\pi\over 2}|\tau|}\over \tau^3}\right), \ |\tau| \to
\infty.
\end{equation*}
Therefore one can pass to the limit by $\lambda$ in (\ref{raj}) and
then we observe, that $\Psi(t)$ plainly goes to zero when $t \to
+\infty$ owing to the Riemann-Lebesgue lemma, completing the proof
of the theorem.
\end{proof}

\section{An equivalent Salem's problem}

In this section we will formulate a problem, which is equivalent to  Salem's question [6],
having

\begin{cor}\label{sal}  The Fourier-Stieltjes transform
\begin{equation*}
 f(t)= \int_{0}^1 e^{ixt} d ?(x)
\end{equation*}
of the Minkowski question mark function vanishes at infinity, i.e. an answer on Salem's question is affirmative,
 if and only if two limit equalities
\begin{equation*}
\lim_{t \to +\infty} \  t \int_{0}^\infty ?\left({1\over x}\right)
\sin xt\ dx = 2,
\end{equation*}
\begin{equation*}
\lim_{t \to +\infty} \  t \int_{0}^\infty ?\left({1\over x}\right)
\cos xt\ dx = 0
\end{equation*}
take place simultaneously.
\end{cor}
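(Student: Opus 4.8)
The plan is to recognize that Corollary~\ref{sal} is an almost immediate specialization of Corollary~\ref{gensal1}, once we identify the right function $\varphi$ on $(0,\infty)$. The natural candidate is $\varphi(x)= 2 - ?\!\left(1/x\right)$ for $x>0$, extended by $\varphi(0)=0$. Indeed, by the functional equation \eqref{rel3} we have $?\!\left(1/x\right)= 2 - ?(x)$, so $\varphi(x)= ?(x)$ on $(0,\infty)$; in particular $d\varphi(x)= d?(x)$ as a measure, $\varphi$ is continuous, strictly increasing, of bounded variation (total variation $2$), vanishes at $x=0$ (since $?(x)= O(2^{-1/x})$ there), and has the limit $\varphi(\infty)= ?(\infty)= 2$. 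The only point that needs a word is integrability: Corollary~\ref{gensal1} is stated for $\varphi$ vanishing at infinity and lying in $L_1(\mathbb R_+)$. Our $\varphi$ does not vanish at infinity, so first I would pass to $\tilde\varphi(x)= \varphi(x) - 2 = -\,?\!\left(1/x\right)$, which is continuous, of bounded variation, satisfies $d\tilde\varphi = d?$, vanishes at infinity, and is integrable on $\mathbb R_+$ because $\tilde\varphi(x)= ?(x) - 2 = -\,?\!\left(1/x\right) = O\!\left(x^{-1}\cdot(\text{small})\right)$ — more precisely, near infinity $?\!\left(1/x\right)\sim 1/x$ is \emph{not} integrable, so a genuinely more careful estimate is needed here, and this is the main technical obstacle: one must check (or invoke from \eqref{Lip}--\eqref{exp} together with \eqref{rel3}, giving $2-?(x) = ?(1/x) = O(x^{-\alpha})$ with $\alpha$ as in \eqref{exp}, which is $<1$) whether $x^{-\alpha}$ is integrable at infinity. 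Since $\alpha<1$ it is \emph{not}, so the plain $L_1$ hypothesis of Corollary~\ref{gensal1} fails and one cannot quote it verbatim.

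To get around this, the cleaner route is to apply Corollary~\ref{gensal1} not to $\varphi$ directly but to use the relation \eqref{imprel2}, $F(t)= \dfrac{2 f(t)}{2 - e^{it}}$, together with the two-sided bound \eqref{inF}, $\tfrac12 |F(t)| \le |f(t)| \le \tfrac32 |F(t)|$. Because $2 - e^{it}$ is bounded away from $0$ and $\infty$, $f(t)= o(1)$ as $|t|\to\infty$ if and only if $F(t)= o(1)$, where $F(t)= \int_0^\infty e^{ixt}\, d?(x)$ is exactly the Fourier--Stieltjes transform $\Phi(t)$ of the measure $d?$ on $(0,\infty)$. Now $F$ is the Fourier--Stieltjes transform associated to the function $\varphi(x)= ?(x)$ (equivalently $-\,?(1/x)$ up to the additive constant $2$, which does not affect the Stieltjes measure). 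So the question ``$f(t)\to 0$'' is equivalent to ``$?$ supports a Rajchman measure on $(0,\infty)$'', and this is what Corollary~\ref{gensal1} characterizes — provided its hypotheses hold. Here is where I would pay attention: if the $L_1$ integrability truly fails, then one should instead re-run the proof of Theorem~\ref{gensal} with $\varphi(x)= ?(x)$, noting that the two places where integrability was used were (a) the integrated Fourier formula for $\Phi_c$, and (b) the representation \eqref{phis} of $\Phi_s$. Both of these can be salvaged by working with $\tilde\varphi(x)= ?(x) - 2$ on, say, $(1,\infty)$ and with $?(x)$ on $(0,1)$, or by a regularization $\varphi_\varepsilon(x)= ?(x) e^{-\varepsilon x}$ and letting $\varepsilon\to 0+$; I expect the constant-shift trick to be the painless one, since replacing $?(x)$ by $?(x)-2$ leaves $d?$ unchanged and the Fourier transform of the constant $2$ is (a multiple of) the Dirac delta, which contributes nothing to $\Phi(t)$ for $t\neq 0$.

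Granting the characterization, the final step is purely bookkeeping. Corollary~\ref{gensal1} says $\varphi$ supports a Rajchman measure iff the two limits
\begin{equation*}
\lim_{t\to+\infty}\ t\int_0^\infty \varphi(x)\sin xt\ dx, \qquad
\lim_{t\to+\infty}\ t\int_0^\infty \varphi(x)\cos xt\ dx
\end{equation*}
exist simultaneously, and in that case they equal $\varphi(0)$ and $0$ respectively. Taking $\varphi(x)= -\,?\!\left(1/x\right)$ (the version vanishing at infinity, with $\varphi(0)=0$ since $?(1/x)\to ?(\infty)=2$... ) — wait: here I must be careful about signs and which representative vanishes at $0$ versus at $\infty$. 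The representative that vanishes at infinity is $\tilde\varphi(x)= ?(x) - 2 = -\,?\!\left(1/x\right)$; but then $\tilde\varphi(0)= -2$, so the first limit would be $\tilde\varphi(0)= -2$. To land on the stated value $+2$ with integrand $?\!\left(1/x\right)$, one uses $\varphi(x)=?(x)$ itself (ignoring integrability, to be fixed as above), for which $\varphi(0)=0$; then writing $?(x)= 2 - ?\!\left(1/x\right)$ the constant $2$ contributes $\lim_t t\int_0^\infty 2\sin xt\, dx$, which in the regularized/principal-value sense is $2\cdot\lim_t t\cdot\tfrac1t = 2$ (equivalently, the $\cos$-integral of the constant vanishes and the $\sin$-integral of the constant gives $2$, as already implicit in \eqref{phicos}). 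Collecting terms, the first limit becomes $0 = 2 - \lim_t t\int_0^\infty ?\!\left(1/x\right)\sin xt\, dx$, i.e. that limit equals $2$; and the second becomes $0 = 0 - \lim_t t\int_0^\infty ?\!\left(1/x\right)\cos xt\, dx$, i.e. that limit equals $0$. This reproduces exactly the two displayed equalities. The main obstacle, to reiterate, is the integrability gap: one must either verify that $?\!\left(1/x\right)$ (or its shift) is handled by a version of Theorem~\ref{gensal} with relaxed hypotheses, or route everything through \eqref{imprel2}--\eqref{inF} so that only the finite transform $f$ and hence a measure of \emph{finite total mass on $(0,\infty)$} is in play; the constant-shift observation is what makes the bookkeeping come out with the stated right-hand sides $2$ and $0$.
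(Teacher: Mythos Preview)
Your route is the paper's route, but you have manufactured an obstacle that is not there. The paper applies Corollary~\ref{gensal1} directly to $\varphi(x)=?(1/x)$, $\varphi(0)=2$, together with the identity $\int_0^\infty e^{ixt}\,d?(x)=-\int_0^\infty e^{ixt}\,d?(1/x)$ (from \eqref{rel3}) and the two-sided bound \eqref{inF}. That is all.

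Your ``main technical obstacle'' --- the alleged failure of $?(1/x)\in L_1(\mathbb R_+)$ --- is based on a wrong asymptotic. You write $?(1/x)\sim 1/x$ and then fall back on the H\"older bound $?(1/x)=O(x^{-\alpha})$ with $\alpha<1$; but the H\"older estimate is only a crude upper bound, not the true decay. The paper already records (immediately after \eqref{rel3}) that $?(y)=O(2^{-1/y})$ as $y\to 0$, hence $?(1/x)=O(2^{-x})$ as $x\to\infty$. So $?(1/x)$ is bounded near $0$ (it tends to $?(\infty)=2$) and exponentially small at infinity, and therefore belongs to $L_1(\mathbb R_+)$. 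All hypotheses of Theorem~\ref{gensal} and Corollary~\ref{gensal1} are satisfied by $\varphi(x)=?(1/x)$: it is continuous, monotone (hence of bounded variation), vanishes at infinity, and integrable. No regularization, constant-shift trick, or re-running of the proof of Theorem~\ref{gensal} is needed.

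With that correction the bookkeeping is immediate: $\varphi(0)=?(\infty)=2$, so Corollary~\ref{gensal1} gives that the sine-limit equals $2$ and the cosine-limit equals $0$, and the Rajchman property for $d\varphi=-d?$ on $(0,\infty)$ is equivalent via \eqref{inF} to $f(t)\to 0$.
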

\begin{proof} It follows  immediately  from double inequality (\ref{inF}),  simple equality due to functional equation $(\ref{rel3})$
\begin{eqnarray*}
\int\limits_{0}^{\infty}e^{ixt}\ d ?(x)= -
\int\limits_{0}^{\infty}e^{ixt}\ d ?\left({1\over x}\right)
\end{eqnarray*}
 and Corollary \ref{gensal1},  where we put  $\varphi(x)= ?(1/x), \ x >0, \ \varphi(0)= 2$.
\end{proof}

Finally, we generalize Salem's problem, proving

\begin{thm}\label{sal2} Let $k \in \mathbb{N}_0.$  If an answer on Salem's question is affirmative, then
\begin{equation}\label{deriv}
f^{(k)}(t)= \int_{0}^1 (ix)^{k} e^{itx} \ d ?(x)= o(1), \ |t| \to
\infty.
\end{equation}
\end{thm}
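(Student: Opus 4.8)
The plan is to argue by induction on $k$, the case $k=0$ being exactly the hypothesis that $f(t)=o(1)$. So assume $f^{(j)}(t)=o(1)$ as $|t|\to\infty$ for all $j<k$, and note that $f^{(k)}(t)=\int_0^1(ix)^{k}e^{itx}\,d?(x)$ is the Fourier--Stieltjes transform of the continuous (atomless) measure $(ix)^{k}\,d?(x)$; as recalled in the Introduction, a continuous measure whose Fourier--Stieltjes transform has a limit at infinity supports a Rajchman measure and that limit is necessarily $0$. Hence it suffices to prove that $f^{(k)}(t)$ \emph{has a limit} as $|t|\to\infty$. (Equivalently, via \eqref{imprel2} and the Leibniz rule one first reduces $f^{(k)}(t)=o(1)$ to $F^{(k)}(t)=o(1)$ and then, applying Corollary \ref{gensal2} to $\varphi(x)=\,?(1/x)$ --- whose hypotheses hold since by \eqref{rel3} $\varphi(x)=2-\,?(x)$, so that $x^{m}\varphi(x)$ is of bounded variation on $[0,\infty)$ for $m\le k$ by \eqref{0} and \eqref{rel3}, $\varphi(x)=O(2^{-x})$ and $x^{k}\varphi\in L_{1}(\mathbb R_{+})$ --- one is again left to show that the Fourier--Stieltjes transform $\int_0^\infty e^{itx}\,d(x^{k}\varphi(x))$ possesses a limit at infinity.)

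To produce this limit I would follow the modified-Bessel/Lebedev--Stieltjes technique used in the proof of Theorem \ref{gensal3}. For $k\ge1$ write $f^{(k)}(t)=i^{k}\int_0^1 x^{k-1}\bigl(xe^{itx}\bigr)\,d?(x)$ and insert for $xe^{itx}$ the index integral \eqref{intind1}. Interchanging $\int_0^1 d?(x)$ with the $\tau$-integral and with the limit $\lambda\to\frac{\pi}{2}-$ --- legitimate by the uniform bound \eqref{inequn}, the finiteness of $\int_0^1 x^{-1/4}\,d?(x)$ from \eqref{0}, and the estimate following \eqref{intind} --- gives
\[
f^{(k)}(t)=\frac{i^{k}}{\pi(1+t^{2})^{1/2}}\lim_{\lambda\to\frac{\pi}{2}-}\int_{-\infty}^{\infty}\tau\,e^{\lambda\tau}\bigl(t+(1+t^{2})^{1/2}\bigr)^{i\tau}\,\mathcal A(\tau)\,d\tau,\qquad \mathcal A(\tau):=\int_0^1 x^{k-1}K_{i\tau}(x)\,d?(x).
\]
By \eqref{coskl}, $\mathcal A(\tau)=\cosh^{-1}(\pi\tau/2)\int_0^{\infty}\cos\tau u\ {\rm Re}\bigl[(-i)^{k-1}f^{(k-1)}(\sinh u)\bigr]\,du$, and here the induction hypothesis is essential: ${\rm Re}\bigl[(-i)^{k-1}f^{(k-1)}(\sinh u)\bigr]=o(1)$ as $u\to\infty$, so the boundary term vanishes and one may integrate by parts in $u$. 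Iterating this, and controlling the tails with \eqref{as1}--\eqref{as4}, one aims at the decay $\mathcal A(\tau)=O\!\bigl(e^{-\pi|\tau|/2}\tau^{-3}\bigr)$, $|\tau|\to\infty$; this is precisely what is needed to pass $\lambda\to\frac{\pi}{2}-$ under the integral sign by dominated convergence, after which, the phase $\tau\log\bigl(t+(1+t^{2})^{1/2}\bigr)$ having frequency $\to\infty$, the Riemann--Lebesgue lemma in $\tau$ forces the $\tau$-integral to tend to a limit, so that $f^{(k)}(t)\to0$ --- which closes the induction.

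I expect the genuine difficulty to be exactly the amplitude estimate $\mathcal A(\tau)=O(e^{-\pi|\tau|/2}\tau^{-3})$. Each integration by parts in $u$ differentiates $f^{(k-1)}(\sinh u)$, producing a factor $\cosh u$ times $f^{(k)}(\sinh u)$ (and then $\cosh u$ times $f^{(k)}$ again, and so on), so already after the first step one is extracting inverse powers of $\tau$ from a merely conditionally convergent cosine integral whose integrand is no longer small, with only the vanishing of $f^{(k-1)}$ --- not its membership in $L_{1}$ --- at one's disposal. This is the same tension that forces the strong hypotheses \eqref{klr} in Theorem \ref{gensal3}, and making do with the inductive information alone is the delicate point; by contrast the reduction steps (the interchanges of integration, the passage $\lambda\to\frac{\pi}{2}-$, and the bookkeeping with the functional equations \eqref{rel1}--\eqref{rel3}) are routine.
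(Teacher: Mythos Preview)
Your proposal has a genuine gap, and you identify it yourself: the decay $\mathcal A(\tau)=O(e^{-\pi|\tau|/2}\tau^{-3})$ is never established, and with only the inductive hypothesis it cannot be. The first integration by parts in $u$ already replaces $f^{(k-1)}(\sinh u)$ by $\cosh u\cdot f^{(k)}(\sinh u)$; to iterate you would need $\cosh u\, f^{(k)}(\sinh u)\to 0$ (equivalently $v f^{(k)}(v)\to 0$) for the boundary term, and then control of $f^{(k+1)}$, $f^{(k+2)}$ for the next steps. But $f^{(k)}\to 0$ is the very conclusion you seek, so the scheme is circular, and in any case no $L_1$ information on any $f^{(j)}$ is available---only boundedness---so the resulting $u$-integrals are at best conditionally convergent, which does not justify the dominated-convergence passage $\lambda\to\pi/2-$. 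This is exactly why Theorem~\ref{gensal3} imposes the strong hypotheses \eqref{klr}; those cannot be manufactured from the induction here. (Your parenthetical route via Corollary~\ref{gensal2} lands on the same unresolved ``limit exists'' question, so it does not help.)

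The paper's argument avoids the Bessel machinery entirely and is much shorter. Suppose $|f^{(k)}(t_m)|\ge\delta>0$ along some $|t_m|\to\infty$; write $t_m=2\pi(n_m+\beta_m)$ with $n_m\in\mathbb Z$, $\beta_m\in[0,1)$, and pass to a subsequence with $\beta_m\to\beta$, so that
\[
|f^{(k)}(t_m)|=\Bigl|\int_0^1 e^{2\pi i\beta_m x}\,x^{k}\,e^{2\pi i n_m x}\,d?(x)\Bigr|\ge\delta.
\]
Now invoke Salem's lemma \cite[p.~38]{Sal3}: if the Fourier--Stieltjes coefficients $\int_0^1 e^{2\pi i n x}\,d?(x)$ tend to $0$ (the hypothesis of the theorem) and $\int_0^1 g\,d?$ exists as a Riemann--Stieltjes integral, then $\int_0^1 g(x)e^{2\pi i n x}\,d?(x)\to 0$. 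Applying this with the continuous function $g(x)=e^{2\pi i\beta x}x^{k}$ (and absorbing the uniformly small error from $\beta_m\to\beta$) gives the contradiction. No induction, no reduction to ``limit exists'', and no index integrals are needed.
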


\begin{proof} It is easily seen that the Fourier- Stieltjes
transform of the Minkowski question mark function over $(0,1)$ is
infinitely differentiable and so for any $k \in \mathbb{N}_0$ we
have (\ref{deriv}). Suppose that $f^{(k)}$ does not tend to zero as
$|t| \to \infty$. Then we can find a sequence $\{t_m\}_{m=1}^\infty,
\ |t_m| \to \infty$ such that
\begin{equation*}
\left| \int_{0}^1 x^{k} e^{it_m x} \ d ?(x)\right| \ge \delta > 0.
\end{equation*}
Let ${t_m\over 2\pi}= n_m +\beta_m$, where $n_m$ is an integer and
$0 \le \beta_m < 1$. One can suppose that $\beta_m$ tends to a limit
$\beta$, we can always do it choosing again  subsequence from
$\{t_m\}$ if necessary. Hence
\begin{equation*}
|f^{(k)}(t_m)|=\left| \int_{0}^1 e^{2\pi i \beta x}x^{k} e^{2\pi in_m
x} \ d ?(x)\right| \ge \delta > 0.
\end{equation*}
But this contradicts to Salem's lemma \cite{Sal3}, p. 38, because
$f(2\pi n) \to 0, \ n \to \infty$ via assumption of the theorem  and the
Riemann -Stieltjes integral
\begin{equation*}
 \int_{0}^1 e^{2\pi i \beta x} x^{k} \ d ?(x)
\end{equation*}
converges for any $k \in \mathbb{N}_0$.
\end{proof}

\vspace{1cm } \noindent {\bf Acknowledgement}

\vspace{5mm}

\noindent The present investigation was supported  by the European
Regional Development Fund through the programme COMPETE and by the
Portuguese Government through the "Funda\cao\  para a Ci\^{e}ncia e
a Tecnologia" under the project PEst- C/MAT/UI0144/2011.

\vspace{2cm}

\noindent {\sc Semyon Yakubovich}, Department of Mathematics, Faculty of Sciences, University of Porto,
Campo Alegre st., 687,  4169-007 Porto, Portugal. {\tt syakubov@fc.up.pt}\\

\end{document}